\newtheorem{theorem}{Theorem}
\newtheorem{corollary}[theorem]{Corollary}
\newtheorem{lemma}[theorem]{Lemma}
\newtheorem{remark}[theorem]{Remark}
\begin{document}

\title{A Characterization Based On Products of Order Statistics}
\author{G. Arslan 
\\Izmir University of Economics, Turkey}

\begin{abstract}
A new characterization for power function distributions is obtained which is based on products of order statistics. This result may be considered as a generalization of some recent results for contractions. We note that in this result the product consists of order statistics from independent samples. The new result is also related to some scheme of ranked set sampling.
\end{abstract}

\maketitle
 
\section{Introduction}
\label{Intro}

In this paper, a new characterization, involving a distributional relation of products of independent 
order statistics is given. This result may be considered as an extension of a particular type of characterization with contraction, where the contraction is obtained via a power distribution. Such 
type of characterizations have been studied, recently by Weso{\l}owski and Ahsanullah (2004), Oncel et al. (2005), and Beutner and Kamps (2008), among others.  As will be seen later, this new characterization 
also will provide a relational property of the power distribution. Another useful property of the new characterization is that it can be actually related to a special scheme of ranked set samples. 

Consider three independent random variables $X$, $Y$, and $U$, where $U$ has some known distribution. 
There are several recent results for relations of the type 

	\begin{equation} \label{basic}
		X \stackrel{d}= YU 
	\end{equation}
	
In its most basic form $U$ can be assumed to have a uniform distribution concentrated on (0,1). In this case, relation (\ref{basic}) is an example of a contraction. These type of relations have some applications like in economic modelling and reliability, for example. Some of the first results of this type were obtained, among others, by Alamatsaz (1985), Kotz and Steutel (1988), and Alzaid and Al-Osh (1991). 

There are many interesting distributional relations and characterization results based on this type of relation. In this paper, we will consider relations where $X$ and $Y$ will be some order statistic. 

We will write $X \sim Pow(\alpha)$ if $F_X(x) = x^{\alpha}, \; \alpha > 0, \; x \in (0,1)$, and $Y \sim Par(\alpha)$, if $F_Y(y) = 1-y^{-\alpha}, \; \alpha > 0, \; y \in (1, \infty)$.

As will be seen later, it will be convenient to describe the new results in terms of some special type 
of a ranked set sample scheme. Actually the result in this paper may be interpreted in different ways. Since 
we will use ordinary order statistics and generalized order statistics we just give the definition of generalized order statistics. 

Let $F$ be an absolutely continuous distribution function (df) with density function $f$. Let $n \in \mathbb{N}$, 
$\tilde{m} = (m_1, \; \ldots, \; m_{n-1}) \in \mathbb{R}^{n-1}$, $k>0$, and for all $1 \leq i \leq n-1$,
\[ \gamma_i = k+n-1+M_i > 0, \]
\noindent where $M_i = \sum^{n-1}_{j=i} m_j$. The random variable $X \left( r,n,\tilde{m},k \right)$, 
$1 \leq r \leq n$, is called the $r$-th generalized order statistics from an absolutely continuous df $F$ 
if their joint density function is 
\[
\begin{split}
	f_{X(1,n,\tilde{m},k), \; \ldots, \; X(n,n,\tilde{m},k)} \left( x_1, \; \ldots, \; x_n \right) 
		\quad\quad\quad\quad\quad\quad\quad\quad\quad\quad\quad\quad\quad  \\
	= k \left( \prod^{n-1}_{j=1} \gamma_j \right) \left( \prod^{n-1}_{i=1} (1-F(x_i))^{m_i} f(x_i) \right)
			(1-F(x_n))^{k-1} f(x_n)
\end{split}
\]

Let $X_{1:n}, X_{2:n}, \ldots, X_{n:n}$ denote the order statistics for random variables $X_1, X_2$, $\ldots, X_n$. Similarly, let $X^*_{1:n}, X^*_{2:n}, \ldots, X^*_{n:n}$ denote the generalized (or $m$-generalized) order statistics for an underlying distribution function $F$, when $m_i = m$ for all $1 \leq i \leq n-1$. For more details about generalized order statistics, the reader is referred to Kamps (1995).

\section{Some Contraction Type Characterizations}
\label{Contractions}

Beutner and Kamps (2008) considered characteriaztions of type (\ref{basic}), where $U \sim Pow(\alpha)$ or $U \sim Par(\alpha)$, and $X,Y$ are neighboring generalized order statistics. In particular, for $U \sim Pow(\alpha)$, they obtained characterizations based on the following relations:

	\[
		X^*_{r:n} \stackrel{d}= X^*_{r+1:n} \cdot U,
	\]
		
	\begin{equation} \label{beutner}
		X^*_{r:n} \stackrel{d}= X^*_{r:n-1} \cdot U,
	\end{equation}

	\[
		X^*_{r:n-1} \stackrel{d}= X^*_{r+1:n} \cdot U.
	\]

\vspace{0.2cm} 
The results in Beutner and Kamps (2008) generalize some other characterizations, given, for example, in Oncel et al. (2005), or Weso{\l}owski and Ahsanullah (2004). The main result of this paper is a characterization of type (\ref{basic}), which is similar to the relation given in the following result of Beutner and Kamps (2008).

	\begin{theorem} \label{BeutnerKamps2008}
		Let $X_{1:n-1}^{*}, \; \ldots , \; X_{n-1:n-1}^{*}$ and $X_{1:n}^{*}, \; \ldots , \; X_{n:n}^{*}$ 
		be $m$-generalized order statistics, $m \neq 1$, based on $F$, let $V$ be independent of 
		$X_{1:n-1}^{*}, \; \ldots , \; X_{n-1:n-1}^{*}$ and let $\alpha >0$ and 
		$r \in \left\{1, \; \ldots, \; n-1 \right\}$ be fixed. Then any two of the following three conditions 
		imply the third:
		
		\begin{enumerate}
				\item $V \sim Pow(\alpha)$,
    
				\item $  X^*_{r:n} \stackrel{d}= X^*_{r:n-1} V  $, 
				
				\item 
					\[
					  F(x) =  
						\begin{cases}
						  G_{m, \lambda, \beta}(x), 
					\: x \in \left(0, \left[ (m+1) \lambda \right]^{-\frac{1}{\beta}}\right), \textup{if} \; m>-1 \\
					    1 - \left( 1 + \lambda (-x)^{\beta} \right)^{-\frac{1}{m+1}}, \textup{if} \;  m<-1 
					  \end{cases}
					\]
		\end{enumerate}	
	\end{theorem}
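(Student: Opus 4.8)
The plan is to reduce all three conditions to statements about Mellin transforms and to exploit that the Mellin transform of a product of independent variables factors. Write $M_n(s)=E[(X^*_{r:n})^s]$, $M_{n-1}(s)=E[(X^*_{r:n-1})^s]$ and $\phi(s)=E[V^s]$ for $s$ in a strip where these converge. Since $V$ is independent of the generalized order statistics, condition (2) is equivalent to the factorization $M_n(s)=M_{n-1}(s)\,\phi(s)$ on that strip; condition (1) is equivalent to $\phi(s)=\alpha/(\alpha+s)$; and I will show that condition (3) is equivalent to the ratio identity $M_n(s)/M_{n-1}(s)=\alpha/(\alpha+s)$. Once these three equivalences are in place, \emph{any two imply the third} is just transitivity of equalities among $\phi(s)$, $M_n/M_{n-1}$ and $\alpha/(\alpha+s)$, together with injectivity of the Mellin transform on the relevant class.

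First I would record the marginal density of the $r$-th $m$-generalized order statistic, $f_{r:n}(x)=C_{r,n}\,f(x)\,\bar F(x)^{\gamma_r-1}[g_m(F(x))]^{r-1}$ with $g_m(t)=\frac{1-(1-t)^{m+1}}{m+1}$ and $\bar F=1-F$, and note the key simplification $f_{r:n}(x)=K\,\bar F(x)^{m+1}f_{r:n-1}(x)$ for a positive constant $K$, since $\gamma_r^{(n)}=\gamma_r^{(n-1)}+(m+1)$ while the factors $[g_m(F)]^{r-1}$ and $f$ are common. The easy direction is (1)+(3)$\Rightarrow$(2): substituting the explicit distribution function of condition (3) makes $\bar F$ a pure power of $x^\beta$ (or of $(-x)^\beta$), so the required moments reduce to Beta-type integrals whose ratio telescopes to $\alpha/(\alpha+s)$; combined with $\phi(s)=\alpha/(\alpha+s)$ from (1) this yields the factorization, hence (2). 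The same computation establishes the forward half of the ratio identity needed elsewhere.

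The heart of the argument, and the step I expect to be the main obstacle, is the converse (1)+(2)$\Rightarrow$(3): here I must recover the exact form of $F$ from $M_n(s)/M_{n-1}(s)=\alpha/(\alpha+s)$, equivalently from the density relation $f_{r:n}(x)=\alpha x^{\alpha-1}\int_x^{\omega}f_{r:n-1}(y)y^{-\alpha}\,dy$ obtained by writing out the product $X^*_{r:n-1}V$ with $V\sim Pow(\alpha)$, where $\omega$ is the right endpoint of the support. Multiplying by $x^{1-\alpha}$ and differentiating removes the integral and gives $\frac{d}{dx}\bigl[x^{1-\alpha}f_{r:n}(x)\bigr]=-\alpha x^{-\alpha}f_{r:n-1}(x)$; inserting $f_{r:n}=K\bar F^{m+1}f_{r:n-1}$ together with the logarithmic derivative of $f_{r:n-1}$ (using $g_m'(F)=\bar F^{m}$) turns this into a first-order ordinary differential equation for $\bar F$. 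Solving it is where the case split enters: the sign of $m+1$ governs whether $\bar F^{m+1}$ is integrable near the upper endpoint, hence whether the solution has bounded support of the power-function type $G_{m,\lambda,\beta}$ (the case $m>-1$) or the unbounded Pareto-type form $1-(1+\lambda(-x)^{\beta})^{-1/(m+1)}$ (the case $m<-1$), with $\beta$ forced by $\alpha$ and the parameters $(m,k,r,n)$ and $\lambda$ fixed by normalization at the endpoint.

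Finally, for (2)+(3)$\Rightarrow$(1) I would use (3) to evaluate $M_n(s)/M_{n-1}(s)=\alpha/(\alpha+s)$ explicitly, combine it with the factorization from (2) to get $\phi(s)=\alpha/(\alpha+s)$ on the strip, and invoke uniqueness of the Mellin transform to conclude $V\sim Pow(\alpha)$. The recurring technical care is convergence and injectivity: in the bounded-support case $m>-1$ all moments exist and determinacy is automatic, whereas in the heavy-tailed case $m<-1$ I would restrict $s$ to the convergence strip (or pass to characteristic functions of $\log V$ and $\log X^*_{r:n}$) and use analyticity to propagate the identity, so that Mellin inversion still pins down the law of $V$.
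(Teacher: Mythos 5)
You should know at the outset that the paper never proves this statement: Theorem \ref{BeutnerKamps2008} is quoted as background from Beutner and Kamps (2008) (and the quotation itself has defects, e.g.\ ``$m \neq 1$'' should read $m \neq -1$), so there is no in-paper proof to compare against and I can only judge your argument on its own terms. Your overall architecture is fine as far as it goes: encoding (1) and (2) as identities among $\phi(s)$, $M_n(s)$, $M_{n-1}(s)$, getting (1)+(3)$\Rightarrow$(2) and (2)+(3)$\Rightarrow$(1) by transitivity plus uniqueness of the characteristic function of the logarithm, and isolating (1)+(2)$\Rightarrow$(3) as the real content — all of that is correct, and your density identity $f^*_{r:n}=K\,\bar F^{\,m+1}f^*_{r:n-1}$ (from $\gamma_r^{(n)}=\gamma_r^{(n-1)}+m+1$) is also correct.

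The genuine gap is in your execution of the hard direction. When you differentiate $x^{1-\alpha}f^*_{r:n}(x)=\alpha\int_x^{\omega}f^*_{r:n-1}(y)y^{-\alpha}dy$ and substitute $f^*_{r:n}=K\bar F^{\,m+1}f^*_{r:n-1}$, the expansion necessarily produces $f^{*\prime}_{r:n-1}$, whose logarithmic derivative contains $f'/f$; the coefficient of $f'/f$ is $K x^{1-\alpha}\bar F^{\,m+1}$, which is nonzero in the interior of the support, so nothing cancels. What you obtain is therefore a \emph{second-order} equation for $F$, not ``a first-order ordinary differential equation for $\bar F$'' as claimed — and it silently assumes $f$ is differentiable, which is not a hypothesis. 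The repair is to stop one derivative earlier and work with distribution functions: the contraction identity gives, using only the existence of $f$,
\begin{equation*}
x\,f^*_{r:n}(x)\;=\;\alpha\bigl[F^*_{r:n}(x)-F^*_{r:n-1}(x)\bigr],
\end{equation*}
and the missing ingredient in your plan is the closed-form identity expressing $F^*_{r:n}-F^*_{r:n-1}$ pointwise in terms of $F$ — the gos analogue of the relation $n\,[F_{k:n}-F_{k:n-1}]\,f=F\,f_{k:n}$ that this paper itself uses for ordinary order statistics. With that identity the displayed equation becomes genuinely first-order and separable (for instance, for $r=1$ it reads $\gamma_1\,x\,\bar F^{\,m}f=\alpha\bigl(1-\bar F^{\,m+1}\bigr)$, whose solutions are exactly the two families in condition (3)); without it, your equation is integro-differential and the case split in the sign of $m+1$ never gets off the ground. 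A secondary but real problem: for $m<-1$ the characterized law is supported on the negative half-line (note the $(-x)^{\beta}$; indeed the displayed equation forces $x\le 0$ there, since its right-hand side is $\le 0$ when $m+1<0$), so $E[(X^*_{r:n})^s]$ and $\log X^*_{r:n}$, which you invoke for that case, do not exist as written — you would have to run the whole Mellin argument on $-X^*_{r:n}$ instead, which your description of that case as ``heavy-tailed Pareto-type'' suggests you did not anticipate.
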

	
In the next section we will present the main result, which in contrast to the above characterization, 
includes a product of two order statistics.

\section{Results}
\label{Results}

\noindent As a preparation for the main result, we introduce a preliminary distributional result, which involves products of maximal order statistics from a uniform distribution. 

Let $X_1, X_2, \ldots, X_n$ be iid random variables with continuos distribution function $F(x)$. 
If $X_i \sim Pow(1)$, that is, uniformly distributed on $(0,1)$, then it can be shown that the following relation is true:

	\[
		U_{1:n} \stackrel{d}{=} U^{(1)}_{1:1} U^{(2)}_{2:2} \cdots U^{(n)}_{n:n}
	\]

\noindent Here the $U^{(i)}_{i:i}$, $1 \leq i \leq n$, denote the maximum order statistics from independent samples of size $i$ from a uniform distribution on $(0,1)$. It should be noted that the $U_{i:i}$ in this product are all independent. In fact, the following result is true as well, which is given here without proof.

\begin{theorem} \label{arslan1}
	Let $X_1, \; X_2, \; ..., \; X_n, \; ...$ be iid absolutely continuous random variables with cdf $F(x), \; x \in (0,1)$. 
	If $\{ U^{(i)}_{i:i} \}$, $i \in \{ 1, \; 2, \; ..., \; n \}$ are independent samples of size $i$ from a uniform distribution on $(0,1)$, and
		
	\[ X_{1:n} \stackrel{d}{=} U^{(1)}_{1:1} U^{(2)}_{2:2} \cdots U^{(n)}_{n:n}, \]
	
\noindent	then $F(x) = x, \; x \in (0,1)$.
\end{theorem}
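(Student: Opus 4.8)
The plan is to identify the law of the product on the right-hand side, reduce the hypothesis to an equality between the distributions of two minima, and then invert. Write $P_n = U^{(1)}_{1:1} U^{(2)}_{2:2} \cdots U^{(n)}_{n:n}$ for the product of the independent maxima. Since the maximum of an independent uniform sample of size $i$ satisfies $U^{(i)}_{i:i} \sim Pow(i)$, i.e. its distribution function is $u^i$ on $(0,1)$, the preliminary identity stated above identifies the law of this product as $P_n \stackrel{d}{=} U_{1:n}$, where $U_{1:n}$ is the minimum of $n$ iid uniform variables on $(0,1)$; the survival function of that minimum is $(1-u)^n$ on $(0,1)$.

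First I would combine this with the hypothesis by transitivity. The survival function of the sample minimum $X_{1:n}$ is $\overline{F}_{X_{1:n}}(x) = (1-F(x))^n$. The assumed relation $X_{1:n} \stackrel{d}{=} P_n$ together with $P_n \stackrel{d}{=} U_{1:n}$ yields $X_{1:n} \stackrel{d}{=} U_{1:n}$, hence
\[
(1-F(x))^n = (1-x)^n, \qquad x \in (0,1).
\]
Because $1-F(x)$ and $1-x$ both lie in $[0,1]$ and $t \mapsto t^n$ is strictly increasing there, taking $n$-th roots gives $1-F(x) = 1-x$, so $F(x) = x$ on $(0,1)$, which is the claim.

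If one prefers an argument that does not quote the preliminary identity, the same conclusion follows from the Mellin transform. Using $E[(U^{(i)}_{i:i})^s] = i/(s+i)$ and independence of the factors, one computes
\[
E[P_n^s] = \prod_{i=1}^n \frac{i}{s+i} = \frac{n!\,\Gamma(s+1)}{\Gamma(s+n+1)}, \qquad s \ge 0,
\]
which is exactly the $s$-th moment of $U_{1:n}$ (whose density is $n(1-u)^{n-1}$). Since every distribution appearing here is supported on the bounded interval $(0,1)$, the moment sequence determines the law, and again $P_n \stackrel{d}{=} U_{1:n}$. Either way, the only genuinely substantive ingredient is the identification of the law of $P_n$; once that is in hand, the characterization collapses to the elementary inversion $(1-F(x))^n = (1-x)^n \Rightarrow F(x)=x$. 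The one step I would be careful about is the root extraction, which is legitimate precisely because both sides are survival probabilities lying in $[0,1]$, where the power map is injective.
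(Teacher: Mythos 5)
Your proof is correct, and its overall skeleton --- identify the law of the product $P_n$ as that of the uniform minimum $U_{1:n}$, then invert $(1-F(x))^n=(1-x)^n$ --- is exactly how the paper intends the result to be obtained; the difference lies in how that identification is established, and there you genuinely depart from the paper. The paper gives no explicit proof at all: its remark says the theorem follows either by direct multiple integration or from the next theorem, i.e.\ from relation (\ref{relation}) for $Pow(\alpha)$ samples, which after iteration (Corollary \ref{cor}) and specialization to $\alpha=1$, $k=1$ yields $U^{(1)}_{1:1}U^{(2)}_{2:2}\cdots U^{(n)}_{n:n}\stackrel{d}{=}U_{1:n}$; that relation is in turn proved by a beta-type integral computation. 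You instead compute the Mellin transform $E[P_n^s]=\prod_{i=1}^n i/(s+i)=n!\,\Gamma(s+1)/\Gamma(s+n+1)$, match it with the moments of $U_{1:n}$, and invoke moment determinacy for distributions with bounded support. Note that your first paragraph alone would not constitute a proof, since the identity $U_{1:n}\stackrel{d}{=}U^{(1)}_{1:1}\cdots U^{(n)}_{n:n}$ is stated in the paper explicitly without proof; it is precisely your Mellin computation that supplies the missing argument, and as a by-product it also proves that preliminary identity. What you gain is a short, self-contained argument using the tool best adapted to products of independent random variables; what the paper's route gains is economy within its larger development, exhibiting Theorem \ref{arslan1} as a special case of relation (\ref{relation}) and Corollary \ref{cor}. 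Neither route needs the Choquet-Deny type Lemma \ref{lema}, which is required only for the main characterization, Theorem \ref{result}, where the factors in the product come from the unknown $F$ rather than from a known uniform distribution.
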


\begin{remark}
The proof of Theorem \ref{arslan1} can be shown directly with multiple integration but this result will also follow from the next theorem.
\end{remark}

Before stating the main results, it will be convenient to introduce ranked set samples (RSS). Ranked set sampling is an alternative sampling design to simple random sampling when actual measurement is either difficult or expensive, but ranking a few units in a small set is relatively easy and inexpensive.
This sampling design was first introduced by McIntyre (1952, 2005) \nocite{McIntyre1952}. 

An RSS can be simply described as follows. Let $X_1, \; X_2, \; \ldots , \; X_n, \; \ldots$ be independent and identically distributed random variables with cdf $F$.

			\vspace{0.3cm}
			\begin{math}
					\begin{array}{cccccc}
						\underline{X_{1:n}} & X_{2:n} & ... & X_{n:n} & \rightarrow & X_{[1,n]} \sim F_{1:n}(x) \\
						X_{1:n} & \underline{X_{2:n}} & ... & X_{n:n} & \rightarrow & X_{[2,n]} \sim F_{2:n}(x) \\
						... & ... & ... & ... & \rightarrow & ... \\
						X_{1:n} & X_{2:n} & ... & \underline{X_{n:n}} & \rightarrow & X_{[n,n]} \sim F_{n:n}(x)
					\end{array}		
			\end{math}

\vspace{0.3cm}
The following results can be interpreted as special cases of the above setup. In particular, we will start by considering the following sets of independent random variables \\

			\begin{math}
					\begin{array}{cccccccc}
						X_{1:n} 	& \cdots & \underline{X_{k:n}} 		& \cdots & X_{n-1:n} 	 & X_{n:n} & 
							\rightarrow & X_{[k,n]} \sim F_{k:n}(x) \\
						Y_{1:n-1} & \cdots & \underline{Y_{k:n-1}} 	& \cdots & Y_{n-1:n-1} & &
							\rightarrow & X_{[k,n-1]} \sim F_{k:n-1}(x) \\
						Z_{1:n} 	& \cdots & Z_{k:n} 								& \cdots & Z_{n-1:n} 	 & \underline{Z_{n:n}} &
							\rightarrow & X_{[n,n]} \sim F_{n:n}(x) 
					\end{array}		
			\end{math}
		
\begin{theorem}
  Let $\left\{ X_1, \ldots, X_n \right\}$, $\left\{ Y_1, \ldots, Y_{n-1} \right\}$, and 
  $\left\{ Z_1, \ldots, Z_n \right\}$ be independent sets of random variables with common 
  distribution function $F(x) = x^{\alpha}$, that is $X_i \sim Pow(\alpha)$. 
  Then, for any $1 \leq k \leq n-1$  
	\begin{equation}\label{relation}
		X_{k:n} \stackrel{d} = Y_{k:n-1} Z_{n:n}.
	\end{equation}
\end{theorem}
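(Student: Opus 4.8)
The plan is to verify the distributional identity \eqref{relation} by comparing Mellin transforms, i.e.\ all fractional moments, exploiting the fact that a product of independent positive random variables corresponds to a sum on the Mellin side and so factorizes the moments. First I would reduce to the uniform case. Since $x \mapsto x^{\alpha}$ is a strictly increasing bijection of $(0,1)$ onto itself carrying $Pow(\alpha)$ to the uniform law $Pow(1)$, order statistics are preserved, and each of $X_{k:n}$, $Y_{k:n-1}$, $Z_{n:n}$ equals in distribution the $\alpha$-th root of the corresponding uniform order statistic. Writing $U_{k:n}$, $V_{k:n-1}$, $W_{n:n}$ for the analogous order statistics built from uniform samples, we have $X_{k:n} \stackrel{d}{=} U_{k:n}^{1/\alpha}$ and $Y_{k:n-1} Z_{n:n} \stackrel{d}{=} (V_{k:n-1} W_{n:n})^{1/\alpha}$, so it suffices to establish $U_{k:n} \stackrel{d}{=} V_{k:n-1} W_{n:n}$.

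Next I would identify the three uniform order statistics as Beta laws: for a sample of size $m$, the $j$-th uniform order statistic is $\mathrm{Beta}(j,\,m-j+1)$, whence $U_{k:n} \sim \mathrm{Beta}(k,\,n-k+1)$, $V_{k:n-1} \sim \mathrm{Beta}(k,\,n-k)$, and $W_{n:n} \sim \mathrm{Beta}(n,1)$. Using $E[B^{t}] = \Gamma(a+t)\,\Gamma(a+b)/[\Gamma(a)\,\Gamma(a+b+t)]$ for $B \sim \mathrm{Beta}(a,b)$, one obtains for every $t>0$
\[
E[U_{k:n}^{t}] = \frac{n!\,\Gamma(k+t)}{\Gamma(k)\,\Gamma(n+1+t)}, \qquad
E[V_{k:n-1}^{t}] = \frac{(n-1)!\,\Gamma(k+t)}{\Gamma(k)\,\Gamma(n+t)}, \qquad
E[W_{n:n}^{t}] = \frac{n\,\Gamma(n+t)}{\Gamma(n+1+t)}.
\]
By the independence of $V_{k:n-1}$ and $W_{n:n}$ we have $E[(V_{k:n-1}W_{n:n})^{t}] = E[V_{k:n-1}^{t}]\,E[W_{n:n}^{t}]$; here the factors $\Gamma(n+t)$ cancel and the product collapses exactly to $E[U_{k:n}^{t}]$. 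Since all the variables live on the bounded interval $(0,1)$, the moment sequence determines the law uniquely (the Hausdorff moment problem on a bounded interval is determinate — indeed the integer moments already suffice), so matching the moments for all $t$ forces $V_{k:n-1}W_{n:n} \stackrel{d}{=} U_{k:n}$ and hence \eqref{relation}.

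I expect no serious obstacle: the only thing to monitor is the bookkeeping of the Gamma-function arguments, which is arranged precisely so that the common "interior" parameter $n$ of the two Beta factors telescopes. Structurally this is nothing but the classical Beta-product identity — if $V \sim \mathrm{Beta}(a,b)$ and $W \sim \mathrm{Beta}(a+b,\,c)$ are independent then $VW \sim \mathrm{Beta}(a,\,b+c)$ — specialized to $a=k$, $b=n-k$, $c=1$. One could alternatively cite that identity directly, or prove it via the one-dimensional density convolution $f_{VW}(p) = \int_{p}^{1} f_{V}(v)\, f_{W}(p/v)\, v^{-1}\,dv$ and a single substitution; but the Mellin/moment computation above is the cleanest and most transparent route, and it also immediately recovers Theorem \ref{arslan1} as the degenerate instance obtained by iterating the factorization.
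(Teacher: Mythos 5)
Your proof is correct, but it takes a genuinely different route from the paper's. The paper argues directly on distribution functions: conditioning on the value of $Y_{k:n-1}$ gives $P(Y_{k:n-1}Z_{n:n}\le x)=F_{k:n-1}(x)+\int_x^1 F_{n:n}\left(\frac{x}{u}\right)f_{k:n-1}(u)\,du$, the order-statistic recurrence $n\left[F_{k:n}-F_{k:n-1}\right]f=Ff_{k:n}$ reduces the claim to evaluating that integral, and for $F(x)=x^{\alpha}$ both sides collapse (after the substitution $t=u^{\alpha}$) to the same multiple of $(x^{\alpha})^{k}(1-x^{\alpha})^{n-k}$. You instead transport everything to the uniform case via the increasing bijection $x\mapsto x^{1/\alpha}$, identify the three order statistics as $\mathrm{Beta}(k,n-k+1)$, $\mathrm{Beta}(k,n-k)$, and $\mathrm{Beta}(n,1)$ laws, and match Mellin transforms; your Gamma bookkeeping is right, the $\Gamma(n+t)$ factors do cancel, and determinacy of the Hausdorff moment problem on $[0,1]$ legitimately upgrades equality of all moments to equality in distribution. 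As you observe, this is exactly the classical Beta product identity with $a=k$, $b=n-k$, $c=1$, and iterating it yields Corollary \ref{cor} immediately. The trade-off is this: your argument is shorter, cleaner, and makes the telescoping structure transparent, whereas the paper's heavier computation is not wasted effort --- the integral equation it establishes, $\frac{F(x)f_{k:n}(x)}{nf(x)}=\int_x^1 F_{n:n}\left(\frac{x}{u}\right)f_{k:n-1}(u)\,du$, is precisely the functional equation from which the proof of the converse characterization (Theorem \ref{result}) departs, feeding into the Choquet--Deny-type Lemma \ref{lema}; the Mellin factorization offers no comparable foothold there, since in the converse $F$ is the unknown and its Mellin transform need not factor in any useful way.
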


\begin{proof}
Let $Y = Y_{k:n-1} Z_{n:n}$. Then 

\begin{equation}
	\begin{split}
	  P \left( Y \leq y \right) & = \int_0^1 F_{n:n} \left( \frac{y}{u} \right) f_{k:n-1} \left( u  \right) du \\
	                            & = F_{k:n-1} \left( y \right) + \int_y^1 F_{n:n} \left( \frac{y}{u} \right) f_{k:n-1} \left( u \right) du
	\end{split}
\end{equation}

Hence, $X_{k:n} \stackrel{d} = Y_{k:n-1} Z_{n:n}$ implies that

\begin{equation}
	F_{k:n} (x) = F_{k:n-1} (x) + \int_x^1 F_{n:n} \left( \frac{x}{u} \right) f_{k:n-1} \left( u \right) du.
\end{equation}
\noindent Since for any $1 \leq k \leq n-1, \; n \left[ F_{k:n} - F_{k:n-1} \right] f = F f_{k:n}$, (see, for example, Weso{\l}owski and Ahsanullah (2004)\nocite{WesAhs04}), it follows that

\begin{equation}
	\frac{F(x) f_{k:n} (x)}{n f(x)} = \int_x^1 F_{n:n} \left( \frac{x}{u} \right) f_{k:n-1} \left( u \right) du.
\end{equation}

Evaluating the integral, using $X_i \sim Pow(\alpha)$,

\[ 
  \begin{split}
    \int_x^1 F_{n:n} \left( \frac{x}{u} \right) f_{k:n-1} \left( u \right) du
        & = k \binom{n-1}{k} \int_x^1 \left( \frac{x^{\alpha}}{u^{\alpha}} \right)^n 
	         (u^{\alpha})^{k-1} (1 - u^{\alpha})^{n-k-1} \alpha u^{\alpha-1} du  \\
				& = k \binom{n-1}{k} x^{\alpha n} \int_x^1 \left( u^{\alpha} \right)^{-n+k-1} (1 - u^{\alpha})^{n-k-1} u^{\alpha-1} du  \\
				& = k \binom{n-1}{k} x^{\alpha n} \int_{x^\alpha}^1 t^{-n+k-1} (1-t)^{n-k-1} dt,  \; \; (t=u^{\alpha}) \\
				& = k \binom{n-1}{k} x^{-\alpha n} \int_{x^\alpha}^1 (v - 1)^{n-k-1} dv  \\
				& = k \binom{n-1}{k} x^{\alpha n} (x^{-\alpha} - 1)^{(n-k)}
  \end{split}
\]

On the other hand, it follows that

\[
  \begin{split}
		\frac{F(x) f_{k:n} (x)}{n f(x)} & = k \binom{n-1}{k} \left[ F(x) \right]^k \left[ 1 - F(x) \right]^{n-k}  \\
																		& = k \binom{n-1}{k} ( x^\alpha )^k (1 - x^\alpha)^{n-k}
  \end{split}  
\]

\noindent Hence, the proof is complete.

\end{proof}

\vspace{0.3cm}
As an immediate consequence of this theorem, any order statistic $X_{k:n}$ from a power distribution can be expressed in terms of
maximum order statistics:

\begin{corollary} \label{cor}
  Let $X_1, \ldots, X_n$ be iid with cdf $F(x) = x^{\alpha}$, $x \in (0,1)$, that is $X_i \sim Pow(\alpha)$. 
  Then for any $1 \leq k \leq n-1$ 
	 \[ X_{k:n} \stackrel{d} = X_{[k:k]} X_{[k+1:k+1]} \cdots X_{[n:n]}.
   \]
\end{corollary}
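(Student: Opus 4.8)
The plan is to prove the corollary by iterating the preceding theorem, peeling off one maximum order statistic at a time. Recall that the theorem gives, for iid $Pow(\alpha)$ samples and any $1 \le k \le n-1$, the relation $X_{k:n} \stackrel{d}{=} Y_{k:n-1}\, Z_{n:n}$ with the two factors independent. Since $Y_{k:n-1}$ is again the $k$-th order statistic of an iid $Pow(\alpha)$ sample, I can feed it back into the theorem (as long as $k \le n-2$), splitting off a further independent maximum and leaving a $k$-th order statistic from a sample of size $n-2$. The governing idea is to repeat this descent in the sample size until the surviving $k$-th order statistic comes from a sample of size exactly $k$.

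First I would record the single-step reduction in the notation of the corollary: by the theorem, $X_{k:n} \stackrel{d}{=} X_{k:n-1}\, X_{[n:n]}$, where $X_{[n:n]}$ is the maximum of an independent size-$n$ sample and the two factors are independent. Next I would run an induction on $n$ with $k$ held fixed, taking as inductive hypothesis that $X_{k:n-1} \stackrel{d}{=} X_{[k:k]} X_{[k+1:k+1]} \cdots X_{[n-1:n-1]}$, a product of independent maxima. Substituting this into the single-step reduction, and using that the freshly introduced factor $X_{[n:n]}$ is independent of all earlier factors, immediately yields the asserted product for $X_{k:n}$.

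The base case is the decisive point and the place where the recursion must terminate correctly. The descent stops once the surviving order-statistic factor is $X_{k:k}$, and here the $k$-th order statistic of a size-$k$ sample is by definition its maximum, so $X_{k:k} = X_{[k:k]}$; no further application of the theorem is required, consistent with the fact that the theorem demands sample size at least $k+1$. Thus the theorem is invoked exactly for sizes $n, n-1, \ldots, k+1$, the recursion bottoms out cleanly, and the product receives its first factor $X_{[k:k]}$.

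The only subtlety I anticipate is the bookkeeping of independence rather than any analytic difficulty: each invocation of the theorem supplies a maximum $X_{[j:j]}$ drawn from a sample independent of everything introduced so far, so the entire collection $\{X_{[k:k]}, \ldots, X_{[n:n]}\}$ is mutually independent and the distributional identities compose legitimately. Since this independence is already built into the statement of the theorem, once the inductive scaffolding is in place the verification is routine, which is exactly why the result is billed as an immediate consequence.
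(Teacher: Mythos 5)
Your proposal is correct and matches the paper's intent exactly: the paper presents the corollary as an ``immediate consequence'' of the preceding theorem, meaning precisely the iteration you carry out, applying $X_{k:m} \stackrel{d}{=} Y_{k:m-1} Z_{m:m}$ for $m = n, n-1, \ldots, k+1$ and terminating at $X_{k:k} = X_{[k:k]}$. Your explicit induction, with the observation that each new maximum is independent of the previously accumulated factors, is simply the careful write-up of that same argument.
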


\begin{remark}
Note that for $k=1$ and $\alpha = 1$, we obtain 
\[ U_{1:n} \stackrel{d}{=} U_{[1:1]} U_{[2:2]} \cdots U_{[n:n]}. \]
\end{remark}

This corollary can be interpreted in light of ranked set samples when it is considered as a special case of a ranked set sample as follows. Let $X_1, \; X_2, \; \ldots , \; X_n, \; \ldots$ be iid random variables with cdf $F$. Consider the following maxima scheme. \\

			\begin{math}
					\begin{array}{cccccc}
						\underline{X_{1:1}} &   &   &   & \rightarrow & X_{[1,1]} \sim F_{1:1}(x) \\
						X_{1:1} & \underline{X_{2:2}} &  &   & \rightarrow & X_{[2,2]} \sim F_{2:2}(x) \\
						... & ... & ... &  & \rightarrow & ... \\
						X_{1:n} & X_{2:n} & ... & \underline{X_{n:n}} & \rightarrow & X_{[n,n]} \sim F_{n:n}(x)
					\end{array}		
			\end{math}

\vspace{0.3cm}
\noindent If $F(x) = x^{\alpha}$, $x \in (0,1)$, that is $X_i \sim Pow(\alpha)$, then 
	 \[ X_{k:n} \stackrel{d} = X_{[k:k]} X_{[k+1:k+1]} \cdots X_{[n:n]}. \]

\vspace{0.3cm}
To prove the main result, we will use the following lemma. We note that this lemma can be considered as another special variant of the Choquet-Deny Theorem. The proof of this lemma is almost the same as the proof of Theorem 1 in Fosam and Shanbhag (1997) \nocite{FosamShanbhag97}. 

\begin{lemma}\label{lema}
Let $H$ be a nonnegative function that is not identically equal to zero on $A=(0,1)$. Also, let $\left\{ \mu_x: x \in A \right\}$ be a family of finite measures such that for each $x \in A$, $\mu (B_x)>0$, where $B_x = (x,1)$. Then a continuous real-valued function $H$ on $A$ such that $H(x)$ has a limit as $x$ tends to $1-$, satisfies
	\begin{equation} \label{eqn_lema}
		\int^{1}_{x} \left[ H(x) - H \left( \frac{x}{u} \right) \right] \mu_{x}(du) = 0, \ x \in (0,1),
	\end{equation}
	
\noindent if and only if it is identically equal to a constant.
\end{lemma}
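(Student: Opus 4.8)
The \emph{if} direction is immediate: if $H$ is constant then $H(x)-H(x/u)\equiv 0$, so the integrand vanishes and \eqref{eqn_lema} holds. Thus the entire content is the \emph{only if} direction, and my plan is to prove it by a maximum‑principle argument that exploits the special geometry of the equation. First I would record the one structural fact that drives everything: for $u\in(x,1)$ the point $x/u$ again lies in $(x,1)$, and as $u$ ranges over $(x,1)$ the map $u\mapsto x/u$ sweeps $x/u$ over all of $(x,1)$. Since $\mu_x(B_x)=\mu_x((x,1))>0$, I may normalise the restriction of $\mu_x$ to $(x,1)$ to a probability measure $\nu_x$; then \eqref{eqn_lema} reads
\[ H(x)=\int_{(x,1)} H\!\left(\tfrac{x}{u}\right)\,\nu_x(du), \]
exhibiting $H(x)$ as a weighted average of the values of $H$ on the interval $(x,1)$ lying strictly to the \emph{right} of $x$; in particular $H(x)\le \sup_{(x,1)}H$. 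Write $c=\lim_{x\to 1-}H(x)$, which exists by hypothesis.

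The heart of the proof is to show $H(x)\le c$ for every $x$. Suppose not; then $\sup_{(0,1)}H>c$, so I can fix a level $\lambda$ with $c<\lambda<\sup H$. The superlevel set $S=\{x\in(0,1):H(x)\ge \lambda\}$ is nonempty and, being the preimage of a closed set under the continuous $H$, is closed in $(0,1)$; moreover, since $H(x)\to c<\lambda$ as $x\to 1-$, the set $S$ stays away from $1$, so $x^{*}:=\sup S$ satisfies $0<x^{*}<1$ and $x^{*}\in S$, giving $H(x^{*})\ge \lambda$. On the other hand, by maximality of $x^{*}$ every $v\in(x^{*},1)$ has $H(v)<\lambda$; applying the averaging identity at $x^{*}$ and using that $\nu_{x^{*}}$ is a probability measure on $(x^{*},1)$ on which $u\mapsto \lambda-H(x^{*}/u)$ is continuous and strictly positive, I get $\int_{(x^{*},1)}\bigl(\lambda-H(x^{*}/u)\bigr)\,\nu_{x^{*}}(du)>0$, i.e. $H(x^{*})<\lambda$ — a contradiction. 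Hence $H\le c$ throughout. Since \eqref{eqn_lema} is linear in $H$ and $-H$ has limit $-c$ at $1-$, the same argument applied to $-H$ yields $H\ge c$, and therefore $H\equiv c$.

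The step I expect to be the only delicate one is this passage to $x^{*}$. Because the averaging acts towards the right endpoint, the supremum of $H$ could a priori be approached only as $x\to 0+$, where no control is assumed and $H$ need not even be bounded, which would block a naive ``attained maximum'' argument. Working with superlevel sets rather than with the global maximum circumvents this, and it is precisely the hypothesis that $H$ has a limit at $1-$ that forces $S$ to be bounded away from $1$ and hence guarantees $x^{*}<1$, so that a genuine interval $(x^{*},1)$ of strictly smaller values is available to produce the strict inequality. This is the same mechanism as in the Choquet--Deny / Fosam--Shanbhag treatment of integrated Cauchy functional equations, specialised here to the multiplicative, $x$‑dependent family $\{\mu_x\}$; note that nonnegativity of $H$ is used only to ensure the integrals are well defined and plays no role in the contradiction itself.
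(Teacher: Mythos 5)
Your proof is correct: normalising $\mu_x$ to a probability measure, observing that $x/u\in(x,1)$ whenever $u\in(x,1)$, and running the superlevel-set maximum principle at $x^{*}=\sup\{x:H(x)\ge\lambda\}$ (with the assumed limit at $1-$ being exactly what keeps $x^{*}<1$ and yields the strict inequality), then repeating the argument for $-H$, is precisely the Choquet--Deny-variant mechanism this lemma rests on. The paper itself writes out no proof --- it only remarks that the argument is almost the same as that of Theorem 1 in Fosam and Shanbhag (1997) --- and your argument is a correct, self-contained rendering of that same approach, including the implicit point that the functional equation itself forces $\int_x^1 H(x/u)\,\mu_x(du)$ to be finite so that the averaging identity is legitimate.
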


\begin{remark}
We note that in this lemma $\left\{ \mu_x: x \in A \right\}$ is a family of finite measures. If these finite measures are probability measures, then equation (\ref{eqn_lema}) can be written as
\[
	H(x) = \int^{1}_{x} H \left( \frac{x}{u} \right) \mu_{x}(du), \ x \in (0,1).
\]
\end{remark}

\begin{theorem} \label{result}
  	Let $\left\{ X_1, \ldots, X_n \right\}$, $\left\{ Y_1, \ldots, Y_{n-1} \right\}$, and 
  	$\left\{ Z_1, \ldots, Z_n \right\}$ be independent sets of random variables with common 
  	distribution function $F$ such that the left and right extremities of $F$  equal to 0 and 1, 
  	respectively. In addition, assume that $f$ exists and is continuous on $(0,1)$. 
  	If for a fixed $1 \leq k \leq n-1$,	 
	 		\[ X_{k:n} \stackrel{d} = Y_{k:n-1} Z_{n:n}, \]
   
  \noindent then $X_i \sim Pow(\alpha)$.
\end{theorem}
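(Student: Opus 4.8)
The plan is to mirror the computation in the preceding sufficiency theorem to extract the governing integral equation, and then to invoke Lemma \ref{lema} to pin down $F$. First I would translate the distributional identity $X_{k:n}\stackrel{d}{=}Y_{k:n-1}Z_{n:n}$ into an equation for $F$ exactly as before: conditioning on the value $u$ of $Y_{k:n-1}$ and using $F_{n:n}=F^{n}$ together with independence gives
\[
F_{k:n}(x) = F_{k:n-1}(x) + \int_x^1 \left[F\!\left(\tfrac{x}{u}\right)\right]^n f_{k:n-1}(u)\,du .
\]
The general order-statistics identity $n[F_{k:n}-F_{k:n-1}]f = Ff_{k:n}$, which holds for \emph{any} absolutely continuous $F$ and not only for power laws, yields $F_{k:n}-F_{k:n-1}=\binom{n-1}{k-1}F^{k}(1-F)^{n-k}$, and after cancelling the common factor $k\binom{n-1}{k}$ this reduces the relation to
\[
\frac{1}{n-k}[F(x)]^{k}[1-F(x)]^{n-k} = \int_x^1 \left[F\!\left(\tfrac{x}{u}\right)\right]^n [F(u)]^{k-1}[1-F(u)]^{n-1-k}f(u)\,du .
\]
None of this uses $X_i\sim Pow(\alpha)$; it is the same manipulation as in the sufficiency proof run in reverse, so this reduced equation is an identity satisfied by the unknown $F$.

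Next I would recast the reduced equation in the multiplicative form demanded by Lemma \ref{lema}. The natural arena is the log-scale: writing $x=e^{-s}$ and $u=e^{-r}$ turns $x/u$ into $e^{-(s-r)}$ and converts the right-hand side into a convolution over $r\in(0,s)$, for which the power law $F(x)=x^{\alpha}$ corresponds to the exponential $\tilde F(s)=e^{-\alpha s}$. The goal is to produce, out of the reduced equation, an expression of the precise shape
\[
\int_x^1 \Big[H(x)-H\!\left(\tfrac{x}{u}\right)\Big]\,\mu_x(du)=0 ,
\]
for a continuous functional $H$ of $F$ and a family $\{\mu_x\}$ of finite measures with $\mu_x((x,1))>0$. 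I would choose $H$ so that its being constant is \emph{equivalent} to $F$ satisfying the multiplicative Cauchy relation $F(x/u)=F(x)/F(u)$, i.e.\ to $F$ being a power function. The endpoint hypotheses, namely that the extremities of $F$ are $0$ and $1$ and that $f$ is continuous on $(0,1)$, are exactly what guarantee that $H$ is continuous on $(0,1)$ and possesses a limit as $x\to 1^{-}$, and that the measures built from the beta-type weight $[F(u)]^{k-1}[1-F(u)]^{n-1-k}f(u)$ are finite with strictly positive mass $\int_{F(x)}^{1}t^{k-1}(1-t)^{n-1-k}\,dt$ on $(x,1)$.

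With such an $H$ in hand, Lemma \ref{lema} forces $H\equiv\text{const}$, and unwinding the definition of $H$ gives $F(x)=x^{\alpha}$ for some $\alpha>0$ on $(0,1)$; that is, $X_i\sim Pow(\alpha)$, which completes the argument.

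The main obstacle is precisely this recasting step. As it stands the reduced equation is \emph{not} already of Choquet--Deny form for the obvious guess $H=F^{n}$: for $u\in(x,1)$ one has $x/u\in(x,1)$, hence $F(x/u)\ge F(x)$, so the difference $[F(x)]^{n}-[F(x/u)]^{n}$ is of one sign and $\int_x^1\big[[F(x)]^{n}-[F(x/u)]^{n}\big]\mu_x(du)$ is strictly negative for any nonzero positive measure and can never vanish. Thus $H$ cannot be a naive monotone functional of $F$; it must be engineered so that $H(x)-H(x/u)$ genuinely changes sign over $u\in(x,1)$, allowing the integral to balance to zero, while remaining continuous with a limit at $1^{-}$. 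Carrying out this construction, and verifying the measure and regularity conditions of Lemma \ref{lema} (a uniqueness result of the same flavour as Theorem~1 of Fosam and Shanbhag (1997)), is the crux of the proof; once it is in place, the conclusion $F(x)=x^{\alpha}$ is immediate.
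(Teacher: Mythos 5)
There is a genuine gap, and you have located it yourself: everything up to your ``reduced equation'' is correct and matches the paper's opening moves, but the construction of the function $H$ to feed into Lemma \ref{lema} is exactly what you leave undone, and it is the heart of the proof. Your structural observation is sound --- no functional of $F$ alone can work, since for $u\in(x,1)$ one has $x/u>x$, so $H(x)-H(x/u)$ would have constant sign --- but the correct conclusion to draw from it is that a \emph{second} equation is needed, one involving the density $f$ and not just $F$. The paper obtains it by differentiating the integral equation
\[
F_{k:n}(x) = F_{k:n-1}(x) + \int^{1}_{x} F_{n:n}\!\left( \tfrac{x}{u} \right) f_{k:n-1}(u)\, du
\]
with respect to $x$: the boundary term $-F_{n:n}(x/x)f_{k:n-1}(x)=-f_{k:n-1}(x)$ cancels against $f_{k:n-1}(x)$, leaving
\[
f_{k:n}(x) = \int^{1}_{x} f_{n:n}\!\left( \tfrac{x}{u} \right) \tfrac{1}{u}\, f_{k:n-1}(u)\, du .
\]
Pairing this with the relation $F(x) f_{k:n}(x) = n f(x) \int^{1}_{x} F_{n:n}(x/u) f_{k:n-1}(u)\, du$ (the one you also derived) and subtracting, using $F_{n:n}=F^{n}$ and $f_{n:n}=nF^{n-1}f$, produces
\[
\int^{1}_{x} F^{n}\!\left( \tfrac{x}{u} \right) \left[ H(x) - H\!\left( \tfrac{x}{u} \right) \right] f_{k:n-1}(u)\, du = 0,
\qquad H(x) = \frac{x f(x)}{F(x)} .
\]
This $H$ involves $f$, so it is not monotone in general and its difference can change sign, which is precisely how the obstruction you identified is evaded; your proposed log-scale convolution reformulation is never needed.

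From there the paper finishes exactly as you anticipated: Lemma \ref{lema}, applied with $\mu_x(B)=\int_{B\cap(x,1)}F^{n}(x/u)f_{k:n-1}(u)\,du$, forces $H\equiv\alpha$ for some constant $\alpha$, and the separable ODE $x f(x)/F(x)=\alpha$ with the extremity conditions $F(0)=0$, $F(1)=1$ gives $F(x)=x^{\alpha}$. So your proposal has the right skeleton (integral equation, Choquet--Deny lemma, solve for $F$) and a correct diagnosis of the difficulty, but it is missing the single decisive idea --- differentiate the identity to generate the companion equation whose combination with the first yields $H(x)=xf(x)/F(x)$ --- and without it the argument does not go through.
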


\begin{proof}
$X_{k:n} \stackrel{d} = Y_{k:n-1} Z_{n:n}$ implies that

	\begin{equation} \label{eqn1}
		F_{k:n}(x) = F_{k:n-1}(x) + \int^{1}_{x} F_{n:n} \left( \frac{x}{u} \right) f_{k:n-1}(u) du
	\end{equation}

Since $n \left[ F_{k:n}(x) - F_{k:n-1}(x) \right] F(x) = n f(x)$, we have

	\begin{equation} \label{eqn2}
		F(x) f_{k:n}(x) = n f(x) \int^{1}_{x} F_{n:n} \left( \frac{x}{u} \right) f_{k:n-1}(u) du
	\end{equation}

\noindent By differentiating (\ref{eqn1}) with respect to $x$, it follows that

	\begin{equation} \label{eqn3}
		f_{k:n}(x) = \int^{1}_{x} f_{n:n} \left( \frac{x}{u} \right) \frac{1}{u} f_{k:n-1}(u) du
	\end{equation}

From (\ref{eqn2}) and (\ref{eqn3}), we obtain

	\[ n f(x) \int^{1}_{x} F_{n:n} \left( \frac{x}{u} \right) f_{k:n-1}(u) du =
			F(x) \int^{1}_{x} f_{n:n} \left( \frac{x}{u} \right) \frac{1}{u} f_{k:n-1}(u) du \]

\noindent or

	\[
		\int^{1}_{x} \left[ n f(x) F_{n:n} \left( \frac{x}{u} \right) - 
		F(x) f_{n:n} \left( \frac{x}{u} \right) \frac{1}{u} \right] f_{k:n-1}(u) du = 0, \ x \in (0,1)
	\]

\noindent This last equation can be written as

	\begin{equation}
		\int^{1}_{x} F^{n} \left( \frac{x}{u} \right) \left[ \frac{xf(x)}{F(x)}  - 
		 \frac{ \frac{x}{u} f \left( \frac{x}{u} \right) } {F \left( \frac{x}{u} \right)} \right] f_{k:n-1}(u) du = 0, \ x \in (0,1),
	\end{equation}

\noindent or, defining $H(x) = \frac{xf(x)}{F(x)}$, 

	\begin{equation}
		\int^{1}_{x} F^{n} \left( \frac{x}{u} \right) \left[ H(x) - 
		H \left( \frac{x}{u} \right) \right] f_{k:n-1}(u) du = 0, \ x \in (0,1).
	\end{equation}

Now, using Lemma \ref{lema} with $\mu_x (B) = \int_{B \cap B_x} F^{n} \left( \frac{x}{u} \right) f_{k:n-1}(u) du$, $B_x = (x,1)$, it follows that $H$ is constant on $(0,1)$;

	\begin{equation}\label{eqn}
		H(x) = \frac{xf(x)}{F(x)} = k, \; x \in (0,1),
	\end{equation}

\noindent for some $k \in \mathbf{R}$. The solution of this separable differential equation with boundary conditions $F(0)=0$ and
$F(1)=1$ implies that $F(x) = x^{\alpha}, \; x \in (0,1)$. 

\end{proof}

\section{Summary}
\label{sum}
A new characterization result based on products of order statistics from independent samples has been obtained.  This characterization result is interesting because in some sense it is an extension of contraction type results studied in the literature. In addition, it may be related to some special scheme of ranked set sampling. Another interesting point of the main result is that as a by-product a representation theorem for order statistics is obtained. Actually, the $k$-th order statistic from a power function distribution can be expressed as a product of independent maximum order statistics as shown in Corollary \ref{cor}. 

We also note that the proof of the main result is obtained by applying a new variant (Lemma \ref{lema}) of the Choquet-Deny Theorem (see, for example, Fosam and Shanbhag (1997) \nocite{FosamShanbhag97}). 

\bibliographystyle{plain}

\begin{thebibliography}{10}
\expandafter\ifx\csname natexlab\endcsname\relax\def\natexlab#1{#1}\fi
\expandafter\ifx\csname url\endcsname\relax
  \def\url#1{\texttt{#1}}\fi
\expandafter\ifx\csname urlprefix\endcsname\relax\def\urlprefix{URL }\fi
\providecommand{\eprint}[2][]{\url{#2}}
\providecommand{\bibinfo}[2]{#2}
\ifx\xfnm\relax \def\xfnm[#1]{\unskip,\space#1}\fi

\bibitem{Alamatsaz85}
\bibinfo{author}{Alamatsaz, M.H.}, \bibinfo{year}{1985}.
\newblock \bibinfo{title}{A note on an article by {A}rtikis}.
\newblock \bibinfo{journal}{Acta Math. Hungar.} \bibinfo{volume}{45},
  \bibinfo{pages}{159--162}.

\bibitem{AlzaidOsh91}
\bibinfo{author}{Alzaid, A.A.}, \bibinfo{author}{Al-Osh, M.A.},
  \bibinfo{year}{1991}.
\newblock \bibinfo{title}{Characterization of probability distributions based
  on the relation {$X \stackrel{d}{=} U(X_1+X_2)$}}.
\newblock \bibinfo{journal}{Sankhy\=a Ser. B} \bibinfo{volume}{53},
  \bibinfo{pages}{188--190}.

\bibitem{BeutnerKamps08}
\bibinfo{author}{Beutner, E.}, \bibinfo{author}{Kamps, U.},
  \bibinfo{year}{2008}.
\newblock \bibinfo{title}{Random contraction and random dilation of generalized
  order statistics}.
\newblock \bibinfo{journal}{Comm. Statist. Theory Methods}
  \bibinfo{volume}{37}, \bibinfo{pages}{2185--2201}.

\bibitem{FosamShanbhag97}
\bibinfo{author}{Fosam, E.B.}, \bibinfo{author}{Shanbhag, D.N.},
  \bibinfo{year}{1997}.
\newblock \bibinfo{title}{Variants of the {C}hoquet-{D}eny theorem with
  applications}.
\newblock \bibinfo{journal}{J. Appl. Probab.} \bibinfo{volume}{34},
  \bibinfo{pages}{101--106}.

\bibitem{Kamps95}
\bibinfo{author}{Kamps, U.}, \bibinfo{year}{1995}.
\newblock \bibinfo{title}{A concept of generalized order statistics}.
\newblock \bibinfo{journal}{J. Statist. Plann. Inference} \bibinfo{volume}{48},
  \bibinfo{pages}{1--23}.
  
\bibitem{KotzSteutel88}
\bibinfo{author}{Kotz, S.}, \bibinfo{author}{Steutel, F.W.},
  \bibinfo{year}{1988}.
\newblock \bibinfo{title}{Note on a characterization of exponential
  distributions}.
\newblock \bibinfo{journal}{Statist. Probab. Lett.} \bibinfo{volume}{6},
  \bibinfo{pages}{201--203}.

\bibitem{McIntyre1952}
McIntyre, G. A. (1952). A method for unbiased selective sampling, using ranked-sets. 
\emph{AustralianJournal of Agricultural Research}, \textbf{3}, 385--390.

\bibitem{McIntyre2005}
McIntyre, G. A. (2005). A method of unbiased selective sampling using ranked sets. 
\emph{The American Statistician}, \textbf{59}, 230--232.
  
\bibitem{OncelEtal05}
\bibinfo{author}{Oncel, S.Y.}, \bibinfo{author}{Ahsanullah, M.},
  \bibinfo{author}{Aliev, F.A.}, \bibinfo{author}{Aygun, F.},
  \bibinfo{year}{2005}.
\newblock \bibinfo{title}{Switching record and order statistics via random
  contractions}.
\newblock \bibinfo{journal}{Statist. Probab. Lett.} \bibinfo{volume}{73},
  \bibinfo{pages}{207--217}.
  
\bibitem{WesAhs04}
\bibinfo{author}{Weso{\l}owski, J.}, \bibinfo{author}{Ahsanullah, M.},
  \bibinfo{year}{2004}.
\newblock \bibinfo{title}{Switching order statistics through random power
  contractions}.
\newblock \bibinfo{journal}{Aust. N. Z. J. Stat.} \bibinfo{volume}{46},
  \bibinfo{pages}{297--303}.

\end{thebibliography}

\end{document}